\newtheorem{theorem}{Theorem}[section]
\newtheorem{lemma}[theorem]{Lemma}
\newtheorem{prop}[theorem]{Proposition}
\newtheorem{cor}[theorem]{Corollary}
\newtheorem{thm}[theorem]{Theorem}
\newtheorem{lem}[theorem]{Lemma}
\newtheorem*{cor*}{Corollary}
\newtheorem*{thm*}{Theorem 2}
\newtheorem*{defn*}{Definition 1}
\newtheorem*{prop*}{Proposition 3}
\theoremstyle{definition}
\newtheorem{rem}[theorem]{Remark}
\newtheorem{defn}[theorem]{Definition}
\newtheorem{example}[theorem]{Example}
\newtheorem*{remark*}{Remark}
\newcommand{\pr}{\operatorname{Prob}}
\newcommand{\act}{\!\curvearrowright\!}
\newcommand{\sA}{\mathsf{A}}
\renewcommand{\a}{\mathfrak{a}}
\newcommand{\sB}{\mathsf{B}}
\renewcommand{\b}{\mathfrak{b}}
\newcommand{\cE}{\mathcal{E}}
\newcommand{\cH}{\mathcal{H}}
\newcommand{\cL}{\mathcal{L}}
\newcommand{\cN}{\mathcal{N}}
\newcommand{\cP}{\mathcal{P}}
\newcommand{\p}{\mathfrak{p}}
\newcommand{\s}{\mathsf{s}}
\newcommand{\cU}{\mathcal{U}}
\newcommand{\cZ}{\mathcal{Z}}
\newcommand{\bC}{{\mathbb{C}}}
\newcommand{\bZ}{{\mathbb{Z}}}
\newcommand{\sM}{\mathsf{M}}
\newcommand{\sN}{\mathsf{N}}
\newcommand{\csr}{C^{*}_{\rm red}}
\newcommand{\om}{\omega}
\newcommand{\id}{\operatorname{id}}
\newcommand{\G}{\Gamma}
\renewcommand{\L}{\Lambda}
\newcommand{\one}{\mathbf{1}}
\title[]{On invariant subalgebras of group $C^*$ and von Neumann algebras}
\author{Mehrdad Kalantar}
\address{Mehrdad Kalantar\\ University of Houston\\ USA}
\email{mkalantar@uh.edu}
\author{Nikolaos Panagopoulos}
\address{Nikolaos Panagopoulos \\ University of Houston\\ USA}
\email{npanagopoulos@uh.edu}
\thanks{MK is supported by a Simons Foundation Collaboration Grant (\# 713667).}
\begin{document}

\maketitle

\begin{abstract}
Given an irreducible lattice $\G$ in the product of higher rank simple Lie groups, we prove a co-finiteness result for the $\G$-invariant von Neumann subalgebras of the group von Neumann algebra $\cL(\G)$, and for the $\G$-invariant unital $C^*$-subalgebras of the reduced group $C^*$-algebra $C^*_{\rm red}(\G)$.

We use these results to show that: (i) every $\G$-invariant von Neumann subalgebra of $\cL(\G)$ is generated by a normal subgroup; and (ii) given a non-amenable unitary representation $\pi$ of $\G$, every $\G$-equivariant conditional expectation on $C^*_\pi(\G)$ is the canonical conditional expectation onto the $C^*$-subalgebra generated by a normal subgroup.
\end{abstract}

\section{Introduction}

Given a discrete group $\G$ and a subgroup $\L\leq \G$, we have a canonical inclusion of the reduced group $C^*$-algebras $C^*_{\rm red}(\L)\subseteq C^*_{\rm red}(\G)$, and the characteristic function $\one_\L$ of $\L$ extends to a conditional expectation $\mathcal{E}_\L: C^*_{\rm red}(\G)\to C^*_{\rm red}(\L)$. Considering the canonical action of $\G$ on $C^*_{\rm red}(\G)$ by unitary conjugations, one  can observe that $\L$ is normal in $\G$ if and only if  $C^*_{\rm red}(\L)$ is $\G$-invariant inside $C^*_{\rm red}(\G)$, or equivalently if  the conditional expectation $\mathcal{E}_\L$ is $\G$-equivariant. 
Similar observations hold for the inclusion $\cL(\L)\subset\cL(\G)$ of the corresponding group von Neumann algebras.

In this sense, $\G$-invariant unital $C^*$-subalgebras of $C^*_{\rm red}(\G)$, and $\G$-invariant von Neumann subalgebras of $\cL(\G)$ can be viewed as ``noncommutative normal subgroups'' of $\G$. Similarly, 
$\G$-equivariant conditional expectations on $C^*_{\rm red}(\G)$ or $\cL(\G)$, can be considered as  alternative noncommutative generalizations of normal subgroups of $\G$.

This note is concerned with these objects. 
We are mostly interested in the case of lattices $\G$ in higher rank semisimple groups, where some of the most prominent results (e.g., Margulis' Normal Subgroup Theorem \cite{Marg-book}, Stuck--Zimmer's rigidity theorem \cite{StuckZim},  character-rigidity \cite{Bek07},\cite{BoutHoud}, \cite{BBHP}, \cite{Pet15}) are concerned with rigidity properties of their normal subgroups, and various generalized objects. In this context, the above considerations naturally lead us to the questions of to what extent, and in what sense, those rigidity results can be extended to the noncommutative setting.

The main result of this paper is that irreducible lattices in higher rank semisimple Lie groups $\G$ have no genuine ``noncommutative normal subgroups''.
\begin{thm}\label{thm:con-exp-rig-VN}
Assume $G$ is a connected semisimple Lie group with trivial center, no non-trivial compact factors, and such that all its simple factors have real rank at least two. Let $\G< G$ be an irreducible lattice. Then, any $\G$-invariant von Neumann subalgebra of $\cL(\G)$ is of the form $\cL(\L)$ for some normal subgroup $\L\le\G$.
\end{thm}

The work of Alekseev--Brugger~\cite{AB} and Brugger's thesis~\cite{Bru}, seem to be the first systematic study of $\G$-invariant von Neumann subalgebras of $\cL(\G)$, where the authors proved that for $\G$ as in Theorem~\ref{thm:con-exp-rig-VN} (with the additional assumption of $G$ being simple), the inclusion $\sM\subset\cL(\G)$ of any non-trivial $\G$-invariant subfactor $\sM$ of $\cL(\G)$ has finite (Jones) index (see also \cite[Section 3]{CD} for some related results). 

Recall that if $\L$ is a subgroup of $\G$, then 
$\L$ has finite index in $\G$ if and only if $\cL(\L)$ has finite index in $\cL(\G)$.
Hence, their result can be considered as a noncommutative version of 
Margulis' Normal Subgroup Theorem (NST) \cite{Marg-book}, which states that every group $\G$ as in Theorem~\ref{thm:con-exp-rig-VN} is \emph{just-infinite}, meaning that every non-trivial normal subgroup of $\G$ has finite index in $\G$. 

In the $C^*$-algebra setting, we prove a complete description of equivariant conditional expectations as follows. 
We denote by $\lambda_\G$ the left regular representation of $\G$.

\begin{thm}\label{thm:con-exp-rig-C}
Let $\G$ be as in the statement of Theorem~\ref{thm:con-exp-rig-VN}, and let $\pi$ be a non-amenable unitary representation of $\G$ (in the sense of Bekka~\cite{Bek90}).
If $\cE$ is a $\G$-equivariant conditional expectation on $C^*_{\pi}(\G)$, then there is a normal subgroup $\L\le\G$ such that $\cE|_{\pi(\G)}=\one_{\pi(\L)}$.
\end{thm}
Here, by a conditional expectation on a unital $C^*$-algebra $\sB$ we mean a unital completely positive (ucp) idempotent map $\cE: \sB\to\sB$ whose range is a $C^*$-subalgebra of $\sB$, and the action of $\G$ on $C^*_{\pi}(\G)$ is by conjugations with unitaries $\pi(g)$, $g\in\G$. 

Let us point out that the amenability condition is quite restrictive for unitary representations of property (T) groups (see \cite{BekVal93}). Examples of non-amenable representations $\pi$ of groups $\G$ as in the statement of Theorem~\ref{thm:con-exp-rig-C} include, for instance, the quasi-regular representation of $\G$ on $\ell^2(\G/\L)$ for any subgroups $\L\le \G$ of infinite index, and the Koopman representation of $\G$ on $L^2(X, \nu)$ for any $\G$-quasi-invariant measure on a compact minimal $\G$-space admitting no $\G$-invariant probability measure (e.g., any topological $\G$-boundary $X$).

Since every von Neumann subalgebra of a finite von Neumann algebra is the image of a unique trace-preserving conditional expectation, Theorem~\ref{thm:con-exp-rig-VN} can be equivalently stated as a rigidity result for conditional expectations on $\cL(\G)$.
 \begin{thm}\label{thm: LG}
 Let $\G$ be as in the statement of Theorem~\ref{thm:con-exp-rig-VN}. If $\cE$ is an ultraweakly continuous $\G$-equivariant conditional expectation on $\mathcal{L}(\G)$, then there is a normal subgroup $\L\le\G$ such that $\cE|_{\lambda_{\G}(\G)}=\one_{\lambda_{\G}(\L)}$.
 \end{thm}

In contrast to the von Neumann algebra case, in general, 
even for groups $\G$ with the infinite conjugacy class (icc) property, it is not the case that any $\G$-invariant unital $C^*$-subalgebra $\sA$ of $C^*_{\rm red}(\G)$ is the image of some conditional expectation (see Example~\ref{ex:no-cond}). 
In particular, Theorem~\ref{thm:con-exp-rig-C} does not offer a complete description of all $\G$-invariant unital $C^*$-subalgebras $\sA$ of $C^*_{\rm red}(\G)$, although in the case of these lattices, we do not know of any concrete example of such $\sA$ that is not the image of an equivariant conditional expectation of $C^*_{\rm red}(\G)$.

We do however prove, in Theorem~\ref{thm:main} below, a co-finiteness result for general $\G$-invariant unital $C^*$-subalgebras of $C^*_{\rm red}(\G)$, where $\G$ is a lattice as in Theorem~\ref{thm:con-exp-rig-VN}.

The co-finiteness of $C^*$-subalgebras in our result is in the following sense.
\begin{defn} \label{def-main}
Let $\pi$ be a unitary representation of the group $\G$.

A $\G$-invariant von Neumann subalgebra $\sN$ of $\pi(\G)^{\prime\prime}$ is said to be \emph{co-finite in $\pi(\G)^{\prime\prime}$}, if the commutant ${\sN}^\prime$ of $\sN$ in $B(H_\pi)$ admits an ultraweakly continuous $\G$-invariant state (commutants are taken in $B(H_\pi)$).

A $\G$-invariant unital $C^*$-subalgebra $\sA$ of $C^*_\pi(\G)$ is said to be \emph{co-finite in $C^*_\pi(\G)$}, if the double commutant ${\sA}^{\prime\prime}$ is co-finite in $\pi(\G)^{\prime\prime}$. 
\end{defn}

\begin{theorem}\label{thm:main}
Let $\G$ be as in the statement of Theorem~\ref{thm:con-exp-rig-VN}. 
Then for every non-trivial unital $\G$-invariant $C^*$-subalgebra $\sA$ of $\csr(\G)$ and every unitary representation $\pi$ of $\G$ which is weakly contained in $\lambda_\G$, $\pi(\sA)$ is co-finite in $C^*_\pi(\G)$.
\end{theorem}
We also prove a von Neumann algebraic version of Theorem~\ref{thm:main}, recorded as Theorem~\ref{thm:main-vN}, which will be a key ingredient in the proof of Theorem~\ref{thm:con-exp-rig-VN}.

The conclusion of Theorem~\ref{thm:main} is indeed a noncommutative just-infiniteness for $\G$ (see Definition~\ref{def:c*-j-inf}). 
Obviously, any group $\G$ that satisfies the conclusion of Theorem~\ref{thm:main} is just-infinite, but not conversely: we will give examples of simple non-amenable groups $\G$ for which the conclusion of Theorem~\ref{thm:main} does not hold (see Example \ref{ex:LeB}). In this sense, the above theorem is a sharper result than Margulis' NST for the lattices $\G$ as in the statement.

We close this introduction by pointing out that a different concept of just-infiniteness for $C^*$-algebras has been studied by Grigorchuk-Musat-R\o rdam \cite{GrigMusRor}.


%

%

\section{$\G$-invariant subalgebras}

In this section we prove some general properties of co-finite $\G$-invariant subalgebras, mostly in the case of $\csr(\G)$ and $\mathcal{L}(\G)$.
Throughout the section, $\G$ is always a discrete group.

We observe that if $\L$ is a finite-index normal subgroup of $\G$, then for any unitary representation $\pi$ of $\G$, $C^*_\pi(\L)$ is co-finite in $C^*_\pi(\G)$, and $\pi(\L)^{\prime\prime}$ is co-finite in $\pi(\G)^{\prime\prime}$. Indeed, in this case, the $\G$-action on $\sM:= {C^*_\pi(\L)}^{\prime} = \pi(\L)^{\prime}$ factors through the finite group $\G/\L$, hence $\sM$ admits a $\G$-invariant ultraweakly continuous state.

One expects that for any natural notion of co-finiteness, certain properties should lift from co-finite subalgebras to the ambient space. In the following theorem, we prove that amenability and Haagerup property are among such properties. 
\begin{thm}\label{thm:no-amen-sub-pi}
Let $\pi$ be a factorial unitary representation of $\G$. 
\begin{enumerate}
\item
If $\pi(\G)^{\prime\prime}$ has a non-trivial co-finite $\G$-invariant von Neumann subalgebra with the Haagerup property, then $\pi(\G)^{\prime\prime}$ also has the Haagerup property.
\item
If $\pi(\G)^{\prime\prime}$ has an amenable non-trivial co-finite $\G$-invariant von Neumann subalgebra, then $\pi(\G)^{\prime\prime}$ is amenable.
\end{enumerate}
\begin{proof}
(1) Let $\sN\subset \pi(\G)^{\prime\prime}$ be a non-trivial co-finite $\G$-invariant von Neumann subalgebra that has the Haagerup property. Let $\sM=\sN^\prime\subset B(H_\pi)$. Then $\sM$ has the Haagerup property \cite[Theorem 3.12]{OkaTom15}.
Let $\tau$ be the $\G$-invariant ultraweakly continuous state on $\sM$ guaranteed by the co-finiteness assumption, and let $\p\in\sM$ be the support projection of $\tau$. Then $\p$ is $\G$-invariant, and therefore $\p\in\pi(\G)^\prime$. Furthermore, $\tau$ restricts to a faithful $\G$-invariant ultraweakly continuous state on the $\G$-von Neumann algebra $\p\sM\p$, which also has the Haagerup property \cite[Corollary 3.13]{OkaTom15}. Since there exists a conditional expectation $\p\sM\p\to (\p\sM\p)^\G = \p\sM^\G\p = \p\pi(\G)^\prime\p$ (see e.g., \cite[Proposition 2.7(1)]{BBHP}), it follows that $\p\pi(\G)^\prime\p$ has the Haagerup property \cite[Theorem 5.9]{OkaTom15}. Since $\p\pi(\G)^\prime\p$ is a factor, this implies $\pi(\G)^\prime$ has the Haagerup property \cite[Corollary 3.13]{OkaTom15}. Invoking \cite[Theorem 3.12]{OkaTom15} once again, the result follows.\\[0.8ex]
(2) The proof is similar, the corresponding permanence results regarding amenability are well-known (see e.g., \cite{DelPop-book}).
\end{proof}
\end{thm}
\begin{cor}\label{cor:amen-cofin->amen}
Let $\G$ be icc. Then
\begin{enumerate}
\item
$\G$ is amenable if and only if  $\cL(\G)$ has a non-trivial co-finite $\G$-invariant amenable von Neumann subalgebra.
\item
$\G$ has the Haagerup property if and only if $\cL(\G)$ has a non-trivial co-finite $\G$-invariant von Neumann subalgebra that has the Haagerup property.
\end {enumerate}
\begin{proof}
The arguments for both parts are similar and therefore we only give the proof of (2). If $\G$ has the Haagerup property then $\cL(\G)$ has the Haagerup property, and so the forward direction follows. Conversely, since $\G$ is icc, the regular representation of $\G$ is factorial. If $\cL(\G)$ has a non-trivial co-finite $\G$-invariant von Neumann subalgebra that has the Haagerup property, Theorem~\ref{thm:no-amen-sub-pi} implies that $\cL(\G)$ has the Haagerup property. Hence $\G$ has the Haagerup property \cite[Theorem~3]{Chod83}.
\end{proof}
\end{cor}

\subsection{Noncommutative just-infiniteness}
Any notion of co-finiteness in the operator algebraic setting naturally leads to noncommutative notions of just-infiniteness. 
\begin{defn} \label{def-main}
We say $\G$ is \emph{$\pi$-just-infinite} if $\pi$ is infinite-dimensional and every nontrivial $\G$-invariant von Neumann subalgebra $\sN\subset \pi(\G)^{\prime\prime}$ is co-finite in $\pi(\G)^{\prime\prime}$. 
\end{defn}
 Recall that $\lambda_\G$ denotes the left regular representation of $\G$.
\begin{lem}\label{lem:icc}
If $\G$ is $\lambda_\G$-just-infinite, then $\G$ is icc. 
\begin{proof}
Assume $\G$ is not icc. Then $\cL(\G)$ contains a non-trivial central projection $\p$.  
Then, $\sN= \bC \p \oplus \bC \p^\perp$ is a non-trivial $\G$-invariant von Neumann subalgebra of $\cL(\G)$, and $\sN^\prime= \p B(\ell^2(\G))\p\,\oplus\, \p^\perp B(\ell^2(\G))\p^\perp$. Since $\G$ is $\lambda_\G$-just-infinite, $\sN^\prime$ admits a $\G$-invariant ultraweakly continuous state. The map $\a \mapsto \p\a \p + \p^\perp \a \p^\perp$, from $B(\ell^2(\G))$ to $\sN^\prime$, is ultraweakly continuous, $\G$-equivariant, and ucp. Hence, we obtain a $\G$-invariant ultraweakly continuous state on $B(\ell^2(\G))$, which implies that $\G$ is finite. This is a contradiction as $\lambda_\G$-just-infinite groups are by definition assumed to be infinite.
\end{proof}
\end{lem}

\begin{example}\label{ex:Z}
It follows from Lemma~\ref{lem:icc} that
there are just-infinite groups that are not $\lambda_\G$-just-infinite.
%
Indeed, the integer group $\G=\bZ$ is just-infinite (in fact, it even satisfies the conclusion of Stuck--Zimmer's rigidity theorem), but not icc, hence it is not $\lambda_\G$-just-infinite by Lemma~\ref{lem:icc}.
\end{example}

\begin{prop}\label{prop:no-Haag-Vn}
Assume $\G$ is $\lambda_\G$-just-infinite and does not have the Haagerup property. Then $\cL(\G)$ does not admit any non-trivial $\G$-invariant von Neumann subalgebras with the Haagerup property.
\begin{proof}
For the sake of contradiction assume
that $\sN$ is a non-trivial $\G$-invariant von Neumann subalgebra of $\cL(\G)$ with the Haagerup property. Since, $\G$ is $\lambda_\G$-just-infinite, $\sN$ is co-finite in $\cL(\G)$, hence Lemma~\ref{lem:icc} and Corollary~\ref{cor:amen-cofin->amen} entail that $\cL(\G)$ has the Haagerup property. Therefore, by \cite[Theorem~3]{Chod83}, the group $\Gamma$ has the Haagerup property, which contradicts our assumption.
\end{proof}
\end{prop}

\begin{prop}\label{prop:no-amen-Vn}
Assume that $\G$ is $\lambda_\G$-just-infinite and non-amenable. Then $\cL(\G)$ does not admit any non-trivial $\G$-invariant amenable von Neumann subalgebras.
\begin{proof}
The argument is similar to the proof of Proposition~\ref{prop:no-Haag-Vn}, using the fact that $\cL(\G)$ is amenable if and only if $\G$ is amenable.
\end{proof}
\end{prop}
In particular, we conclude
\begin{cor}\label{cor:no-amena}
Assume that $\G$ is $\lambda_\G$-just-infinite and non-amenable. Then $\cL(\G)$ does not admit any Cartan subalgebra with $\lambda_\G(\G)\subseteq\cN_{\cL(\G)}(\sN)$.
\end{cor}

Let us point out that the current understanding of Cartan subalgebras of $\mathcal{L}(\G)$ is far from complete in the higher-rank realm. Important partial results in this direction were obtained recently by Boutonnet--Ioana--Peterson in \cite{BouIoaPet}, where they proved the lack of certain classes of Cartan subalgebras.

We conclude the section with another useful consequence of Proposition~\ref{prop:no-amen-Vn}.

\begin{cor}\label{cor:no-amena}
Assume $\G$ is $\lambda_\G$-just-infinite and non-amenable. Then every $\G$-invariant von Neumann subalgebra of $\cL(\G)$ is a factor.
\begin{proof}
Let $\sM$ be a $\G$-invariant von Neumann subalgebra of $\cL(\G)$. Then the center $\cZ(\sM)$ of $\sM$ is a $\G$-invariant abelian von Neumann subalgebra of $\cL(\G)$. Hence, $\cZ(\sM)=\bC1$ by Proposition~\ref{prop:no-amen-Vn}.
\end{proof}
\end{cor}

\subsection*{Weak just-infiniteness}

We have parallel results in the $C^*$-algebra setup, and as we see, a formally weaker just-infiniteness condition would suffice.

\begin{defn} \label{def-main}
Let $\pi$ be a unitary representation of $\G$. 
We say $\G$ is \emph{weakly $\pi$-just-infinite} if $\pi$ is infinite-dimensional and every nontrivial $\G$-invariant unital $C^*$-subalgebra of $C^*_\pi(\G)$ is co-finite in $C^*_\pi(\G)$.
\end{defn}
The next two results are $C^*$-algebraic analogues of Propositions~\ref{prop:no-Haag-Vn} and~\ref{prop:no-amen-Vn}. By the canonical trace, we mean the (unique) state on $\csr(\G)$ that extends the dirac function supported on the neutral element $e\in\G$.
\begin{prop}\label{prop:no-Haag-C*}
Assume $\G$ is weakly $\lambda_\G$-just-infinite and does not have the Haagerup property. Then $\csr(\G)$ does not admit any non-trivial $\G$-invariant exact $C^*$-subalgebra on which the restriction of the canonical trace is an amenable trace.
\begin{proof}
For the sake of contradiction, assume that $\sA$ is a non-trivial exact $\G$-invariant unital $C^*$-subalgebra of $\csr(\G)$ such that the restriction of the canonical trace $\tau$ to $\sA$ is an amenable trace on $\sA$. Then by \cite[Corollary 3.7]{Suz13} the pair $(\sA, \tau)$ has the Haagerup property in the sense of \cite{Dong11} and so from \cite[Lemma 4.5]{Suz13} follows that $\sA^{\prime\prime}$ has the Haagerup property. Since $\sA$ is co-finite in $\csr(\G)$ by $\lambda_\G$-just-infiniteness, $\sA^{\prime\prime}$ is co-finite in $\cL(\G)$ and so as in the proof of Proposition~\ref{prop:no-Haag-Vn} we get a contradiction.
\end{proof}
\end{prop}

\begin{prop}\label{prop:no-amen-C*}
Assume that $\G$ is weakly $\lambda_\G$-just-infinite and non-amenable. Then $\csr(\G)$ does not admit any non-trivial $\G$-invariant nuclear $C^*$-subalgebras.
\begin{proof}
For the sake of contradiction assume that $\csr(\G)$ admits a non-trivial nuclear $\G$-invariant unital $C^*$-subalgebra $\sA$. Then $\sA$ is co-finite in $\csr(\G)$ by $C^*$-just-infiniteness and so $\sA^{\prime\prime}$ is a non-trivial amenable co-finite von Neumann subalgebra of $\cL(\G)$. Corollary~\ref{cor:amen-cofin->amen} implies that $\cL(\G)$ is amenable, hence $\G$ is amenable. This is a contradiction, and the proof is complete.
\end{proof}
\end{prop}

%

\subsection{$C^*$-just-infinite groups}
In this section we prove Theorem~\ref{thm:main}. 

\begin{defn} \label{def:c*-j-inf}
A discrete group $\G$ is said to be \emph{$C^*$-just-infinite}, if it is infinite and for every non-trivial invariant unital $C^*$-subalgebra $\sA$ of $\csr(\G)$ and every unitary representation $\pi$ of $\G$ which is weakly contained in $\lambda_\G$, $\pi(\sA)$ is co-finite in $C^*_\pi(\G)$.
\end{defn}

We remark that for a given unitary representation $\pi$ of $\G$ and an invariant $C^{*}$-subalgebra $\sA\subset C^*_\pi(\G)$, co-finiteness of $\sA$ is effectively a von Neumann algebraic property: it is equivalent to the same property for the bi-commutant ${\sA}^{\prime\prime}$.
Noting that the unitary representations $\pi$ of $\G$ which are weakly contained in $\lambda_\G$ correspond to $\ast$-representations of $\csr(\G)$, we see that $C^*$-just-infiniteness is defined in terms of von Neumann algebras generated by all $\ast$-representations of $\csr(\G)$.

Before getting to the proof of Theorem~\ref{thm:main}, let us prove some general properties of $C^*$-just-infinite groups.
\begin{prop} \label{prop:counter}
Let $\G$ be a $C^*$-just-infinite group. Then 
$\G$ is either amenable or $C^{*}$-simple.
\begin{proof}
Assume that $\G$ is not $C^*$-simple. Let $\pi\colon  \csr(\G)\to B(H_\pi)$ be a non-faithful representation, and let ${I}={\rm ker}(\pi)$. In particular, $\pi$ defines a unitary representation of $\G$ which is weakly contained in $\lambda$. Define $\sA = {I} + \bC 1$. Then $\sA$ is a $\G$-invariant unital $C^*$-subalgebra of $\csr(\G)$, and $\pi(\sA)= \bC 1$. Thus, by the assumption, there exists a $\G$-invariant state on $B(H_\pi)$. Since $\pi$ is weakly contained in $\lambda$, there is also a $\G$-invariant state on $B(\ell^2(\G))$ \cite[Corollary~5.3]{Bek90}. Restricting this state to $\ell^\infty(\G)$, it yields an invariant mean on $\ell^\infty(\G)$, and so $\G$ is amenable.
\end{proof}
\end{prop}

\begin{example}\label{ex:LeB}
It follows from Proposition~\ref{prop:counter} that
there are simple non-amenable groups $\G$ that are not $C^*$-just-infinite.
Indeed, the existence of non-$C^*$-simple groups with the unique trace property was proven by Le Boudec in \cite{LBou17}. And as remarked after \cite[Theorem~C]{LBou17}, those examples include groups admitting simple subgroups of index two. Obviously any such subgroup is also non-$C^*$-simple, and non-amenable, and therefore also non-$C^*$-just-infinite by Proposition~\ref{prop:counter}.
\end{example}


\subsection*{Proof of Theorem~\ref{thm:main}}
The proof of Theorem~\ref{thm:main} is structured similarly to Margulis' proof of the  Normal Subgroup Theorem, consisting of two parts: the Amenability half, and the Property~(T) half. 
\begin{proof}[Proof of Theorem~\ref{thm:main}]
Let $\sA\subset \csr(\G)$ be a non-trivial $\G$-invariant unital $C^*$-subalgebra, and let $\pi$ be a unitary representation of $\G$ which is weakly contained in $\lambda$. 
\\[1.2ex]
{\bf Amenability half.}\, We prove that ${\pi(\sA)}^\prime$ admits a $\G$-invariant state.

Let $P$ be a minimal parabolic subgroup, and $K$ a maximal compact subgroup of $G$ such that $G=KP$. Let $\nu_P$ be the unique $K$-invariant probability on $G/P$, and $\mu$ a fully supported probability on $\G$ such that $(G/P, \nu_P)$ is the Poisson boundary of the $(\G, \mu)$-random walk \cite{Furs67}.

The action $G\act G/P$ is faithful; indeed, since ${\rm ker}(G\act G/P)$ is contained in $P$, it is a normal amenable subgroup of $G$, which is also a subproduct of $G$ by semisimplicity. Since each factor of $G$ is non-amenable, it follows ${\rm ker}(G\act G/P)$ is trivial. In particular, $\G$ also acts faithfully on $G/P$.
Thus, since the action $\G\act G/P$ is strongly proximal \cite{Furs67}, it follows $\G$ has trivial amenable radical \cite[Proposition~7]{Furm03}. 
This implies $\G$ is $C^*$-simple by \cite[Theorem~1]{BCdlH} (cf. \cite[Corollary~6.10]{BKKO}). Hence, $\pi$ is weakly equivalent to $\lambda$, and therefore the map $\pi(g)\mapsto\lambda(g)$ extends to a $\G$-equivariant ucp map $\psi\colon  B(H_\pi)\to B(\ell^2(\G))$. 
Since $C^*_\pi(\G)$ lies in the multiplicative domain of $\psi$, it follows that $\psi({\pi(\sA)}^\prime)\subset {\sA}^\prime$. Therefore, it suffices to show that ${\sA}^\prime$ admits a $\G$-invariant state.

Define $\theta_\mu=\sum_{g\in\G}\mu(g) {\rm Ad}_{\rho(g)}$, where $\rho\colon \G\to B(\ell^2(\G))$ is the right regular representation. Then $\theta_\mu$ is a $\G$-equivariant ucp map on $B(\ell^2(\G))$, and we denote by ${\widetilde{\cH}}_\mu$ the noncommutative Poisson boundary of $(\G,\mu)$ in the sense of Izumi \cite{Izu04}, i.e., the space $\{\a\in B(\ell^2(\G)) : \theta_\mu(\a) = \a\}$ of fixed points of $\theta_\mu$. We also denote by $\cH_\mu = {\widetilde{\cH}}_\mu\cap \ell^\infty(\G)$  the space of bounded $\mu$-harmonic functions on $\G$.

Any point-weak* limit $\cE_\mu$ of the sequence of maps $\frac1n \sum_{k=1}^{n} \theta_{\mu^{k}}$ defines a $\G$-equivariant ucp idempotent from $B(\ell^2(\G))$ onto ${\widetilde{\cH}}_\mu$, and endowed with the Choi--Effros product defined by $\cE_\mu$, the space ${\widetilde{\cH}}_\mu$ becomes a von Neumann algebras. Note that ${\widetilde{\cH}}_\mu$ is also canonically equipped with a $\G$-action since it is a $\G$-invariant subspace of $B(\ell^2(\G))$. 

Since $\csr(\G)$ lies in the multiplicative domain of $\cE_\mu$, it follows that $\cE_\mu({\sA}^\prime)\subset {\sA}^\prime\cap {\widetilde{\cH}}_\mu$. We will show that the latter intersection admits a $\G$-invariant state, hence its composition with $\cE_\mu$ yields a $\G$-invariant state on ${\sA}^\prime$, and this will conclude the proof of this part.

By \cite[Theorem~4.1]{Izu04}, there is a $\G$-equivariant von Neumann algebra isomorphism $\Phi\colon  \G\ltimes L^\infty (G/P)\to {\widetilde{\cH}}_\mu$ that restricts to the identity map on $\csr(\G)$, and to the Poisson transform $\cP_{\nu_P}\colon  L^\infty (G/P) \to \cH_\mu$ defined by $\cP_{\nu_P}(f)(g) = \int_{G/P}f(gx) \,d\nu_P(x)$ for $f\in L^\infty (G/P)$ and $g\in \G$. 

Since $\Phi$ is identity on $\csr(\G)$, it maps the relative commutant $\sB$ of $\sA$ in the crossed product $\G\ltimes L^\infty (G/P)$ onto ${\sA}^\prime\cap {\widetilde{\cH}}_\mu$. Thus, it now suffices to show that the $\G$-von Neumann algebra $\sB$ admits a $\G$-invariant state. 

Let $\cE_0\colon \G\ltimes L^\infty (G/P)\to L^\infty (G/P)$ be the canonical conditional expectation, which is $\G$-equivariant and faithful. Then, the composition $\varrho(\cdot) \colon = \int_{G/P} \cE_0(\cdot)(x) \,d\nu_P(x)$ defines a faithful ultraweakly continuous $\mu$-stationary state on $\G\ltimes L^\infty (G/P)$. We will show that the restriction of $\varrho$ to $\sB$ is $\G$-invariant. 

First, we see that $\G\act \sB$ is ergodic, i.e., the $\G$-invariant subalgebra $\{\b\in \sB : g\b=\b ~ \forall g\in \G\}$ of $\sB$ is trivial. One can show this, for instance, by using the freeness of $\G\act G/P$ to conclude a unique stationarity as in \cite[Example~4.13]{HartKal}, and then apply \cite[Proposition~2.7.(3)]{BBHP}; (alternatively, \cite[Theorem A]{DasPet20} can be used to show the claim).
However, it was pointed out to us by R\'{e}mi Boutonnet that the ergodicity in the above follows from the following general fact, which we include for the future easy reference; we thank him for providing us with the proof.
%
%
\begin{lemma}[Boutonnet]\label{lem:bout}
Let $\G$ be an icc group  with $\mu \in \pr(\G)$ whose support generates $\G$. If $(X,\nu)$ is
any ergodic $(\G, \mu)$-space, then the action of $\G$ on the crossed product $\G\ltimes L^\infty (X)$ by conjugation is ergodic.
\begin{proof}
Denote by $\cE: \G\ltimes L^\infty (X)\to L^\infty (X)$ the canonical conditional expectation.
Let $\a\in \G\ltimes L^\infty (X)$ be $\G$-fixed, and set $\b=\a- \cE(\a)$. Then $\b$ is $\G$-fixed and $\cE(\b)=0$. 
For $g \in \G$, let $\phi_{g} := \cE(\b \lambda_{g^{-1}}) \in L^{\infty}(X)$ be the Fourier coefficient of $\b$ at $g$. 
Since $\b$ is $\G$-fixed, the map $g\mapsto \left|\phi_{g}\right|^{2}$ is $\G$-equivariant with respect to the conjugation action of $\G$ on itself, and so the function $\zeta(g):= \int_{X} \left|\phi_{g}\right|^{2}d\nu$ is $\mu$-harmonic (for the conjugate action).  Moreover, $\sum_{g \in \G} \zeta(g) = \int_{X} \cE(\b^{*} \b)d\nu<\infty$, and so $\zeta\in\ell^{1}(\G)$. Thus, $\zeta$ attains its maximum values on a finite conjugate-invariant subset of $\G$, that is $\{e\}$ by the icc assumption. Since $\zeta\geq 0$ and $\zeta(e) = 0$, it follows $\zeta = 0$. So $\b=0$, and $\a=\cE(\a)$. The function $\cE(\a)\in L^\infty(X)$ is $\G$-invariant, hence constant by ergodicity of $\G\act (X, \nu)$.
\end{proof}
\end{lemma}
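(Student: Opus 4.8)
The natural strategy is to expand a conjugation-fixed element into its Fourier series over $\G$ and to show that the only surviving contribution sits on the diagonal copy of $L^\infty(X)$, where ergodicity of the $(\G,\mu)$-space finishes the job. Write $\Xi\colon \G\ltimes L^\infty(X)\to L^\infty(X)$ for the canonical conditional expectation. Conjugation by $\lambda_g$ preserves $L^\infty(X)$ and merely permutes the Fourier grading (it sends degree $h$ to degree $ghg^{-1}$), so $\Xi$ is equivariant for the conjugation action. Given a $\G$-fixed $\a$, I would first replace it by $\b=\a-\Xi(\a)$, which is again $\G$-fixed and satisfies $\Xi(\b)=0$; it then suffices to show $\b=0$ and that the $\G$-invariant function $\Xi(\a)\in L^\infty(X)$ is constant. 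Recording the Fourier coefficients $\phi_g=\Xi(\b\lambda_{g^{-1}})\in L^\infty(X)$, a direct computation with the covariance relation turns $\G$-fixedness of $\b$ into the equivariance $\phi_{gkg^{-1}}=\sigma_g(\phi_k)$, where $\sigma$ denotes the given $\G$-action on $L^\infty(X)$.

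The heart of the argument is to pass to the scalar function $\zeta(g):=\int_X|\phi_g|^2\,d\nu$ on $\G$ and to establish two things about it: (i) $\zeta$ is $\mu$-harmonic for the conjugation random walk $k\mapsto g^{-1}kg$, and (ii) $\zeta\in\ell^1(\G)$. For (i), the equivariance of the $\phi_g$ gives $|\phi_{g^{-1}kg}|^2=\sigma_{g^{-1}}(|\phi_k|^2)$, and integrating against $\nu$ yields $\zeta(g^{-1}kg)=\int_X|\phi_k|^2\,d(g_*\nu)$; averaging over $\supp\mu$ and invoking stationarity $\mu*\nu=\nu$ collapses this back to $\zeta(k)$, so that $\zeta(k)=\sum_g\mu(g)\,\zeta(g^{-1}kg)$. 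This is precisely the step where stationarity, rather than invariance, of $\nu$ is indispensable. For (ii), a direct computation gives $\Xi(\b\b^*)=\sum_g|\phi_g|^2$, whence $\sum_g\zeta(g)=\int_X\Xi(\b\b^*)\,d\nu\le\|\b\|^2<\infty$, since $\nu$ is a probability measure.

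With $\zeta\ge0$, harmonic, and summable in hand, the endgame is the icc argument. Summability forces $\zeta$ to attain its maximum $M$ on a nonempty finite set $F$ (its values tend to $0$ at infinity). If $\zeta(k)=M$, then harmonicity and $\zeta(g^{-1}kg)\le M$ force $\zeta(g^{-1}kg)=M$ for every $g\in\supp\mu$; since $\supp\mu$ generates $\G$, the set $F$ is invariant under all conjugations, hence a finite union of conjugacy classes. The icc hypothesis then forces $F=\{e\}$. But $\zeta(e)=\int_X|\Xi(\b)|^2\,d\nu=0$, so $M=0$ and $\zeta\equiv0$; thus every $\phi_g$ vanishes $\nu$-a.e.\ and $\b=0$. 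Consequently $\a=\Xi(\a)\in L^\infty(X)$ is $\G$-invariant, and ergodicity of $(X,\nu)$ as a $(\G,\mu)$-space makes it constant, proving the fixed-point algebra is trivial.

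I expect the main obstacle to be the bookkeeping in step (i): correctly aligning the conjugation action on the group side, the action $\sigma$ on $L^\infty(X)$, and the one-sided transformation of $\nu$, so that the averaged identity genuinely produces a harmonic $\zeta$ and not merely a twisted analogue. Once that harmonicity identity is secured—and it is the only place where the measure-theoretic hypotheses on $\mu$ and $\nu$ enter nontrivially—the passage to the finite maximum set and the appeal to the icc condition are routine.
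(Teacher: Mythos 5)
Your proposal is correct and follows essentially the same route as the paper's proof: the same reduction to $\b=\a-\Xi(\a)$, the same Fourier coefficients $\phi_g$, the same summable $\mu$-harmonic function $\zeta(g)=\int_X|\phi_g|^2\,d\nu$ for the conjugation walk, and the same icc/maximum-principle endgame. You have merely filled in the harmonicity and summability computations that the paper leaves implicit, and these are carried out correctly.
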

Returning to the proof of the theorem, we can now invoke \cite[Theorem~B]{BoutHoud} to conclude that either the restriction ${\varrho}|_\sB$ is $\G$-invariant, or that there exists a closed subgroup $P\subset Q\subsetneq G$ and a $\G$-equivariant von Neumann algebra embedding  $\vartheta\colon L^{\infty}(G/Q,\nu_{Q}) \to \sB$, such that $\varrho \circ \vartheta=\nu_{Q}$, where $\nu_Q$ is the pushforward of $\nu_P$ under the canonical map $G/P\to G/Q$. 
We will rule out the latter possibility.

For the sake of contradiction, assume otherwise, that such $\vartheta$ exists. Then the composition $\cE_0\circ \vartheta$ is an ultraweakly continuous $\G$-equivariant ucp map from $L^{\infty}(G/Q,\nu_{Q})$ into $L^{\infty}(G/P,\nu_P)$. The canonical embedding is the unique such map (see e.g., \cite[Theorem~2.14]{BadShal06}), so $\cE_0\circ \vartheta=\id|_{L^{\infty}(G/Q)}$. Since $\cE_0$ is a faithful conditional expectation, it follows $\vartheta=\id|_{L^{\infty}(G/Q)}$ by \cite[Lemma~3.3]{Ham85}.
In particular, $L^{\infty}(G/Q)\subset \sB\cap L^{\infty}(G/P)$, and therefore, $\Phi\left(L^{\infty}(G/Q)\right)\subset \sA^{\prime}\cap \cH_\mu$.
As remarked above, $\Phi$ restricts to the Poisson transform on $L^\infty(G/P)$, so $\sA\subset {\left(\cP_{\nu_Q}(L^{\infty}(G/Q))\right)}^\prime$. 

Since $\sA$ is non-trivial, and $\sA\cap \ell^\infty(\G)=\bC1$, there exists $\a\in \sA$ and $g, h\in \G$ with $g\neq h$ such that $\langle \a\delta_{g},\delta_{h}\rangle\neq 0$. 
Denoting by $\mathfrak{m}_\phi\in B(\ell^2(\G))$ the multiplication operator associated to a function $\phi\in \ell^\infty(\G)$, $\a$ commutes with $\rho(g)\mathfrak{m}_{\cP_{\nu_Q}(f)}\rho(g^{-1}) = \mathfrak{m}_{\cP_{g\nu_Q}(f)}$ for every $g\in \G$ and $f\in C(G/Q)$. The action $G\act G/Q$ is strongly proximal, so given $x\in G/Q$ there exits a net $(g_i)$ in $\G$ such that $g_i\nu_Q\xrightarrow{\text{weak*}} \delta_x$ in ${\rm Prob}(G/Q)$. This implies $\mathfrak{m}_{\cP_{g_i\nu_Q}(f)} \xrightarrow{\rm WOT} \mathfrak{m}_{\cP_{\delta_x}(f)}$ in $B(\ell^2(\G))$, and therefore $\a$ commutes with $\mathfrak{m}_{\cP_{\delta_x}(f)}$ for every $f\in C(G/Q)$ and $x\in G/Q$. 
Hence, 
\[\begin{split}
 f(g x)  \langle \a \delta_{g},\delta_{h}\rangle
 &=\langle \a \mathfrak{m}_{\cP_{\delta_x}(f)}\delta_{g},\delta_{h}\rangle
 =\langle \mathfrak{m}_{\cP_{\delta_x}(f)}\a \delta_{g},\delta_{h}\rangle
\\& =\langle \a \delta_{g},\mathfrak{m}_{\cP_{\delta_x}(\bar{f})}\delta_{h}\rangle=
f(h x)\langle \a \delta_{g},\delta_{h}\rangle
\end{split}\]
for all $f\in C(G/Q)$ and $x\in G/Q$, which implies that $gh^{-1}$ lies in ${\rm ker}(\G\act G/Q)$. But it follows from similar reasoning as in the beginning of the proof that $\G\act G/Q$ is faithful. This contradicts $g\neq h$, and therefore completes the proof of the first half.
\\[1.5ex]
{\bf Property~(T) half.}\, 
The assumptions imply that $G$ has property~(T) and since $\G$ is a lattice in $G$, $\G$ also has property~(T) (see e.g., \cite[Theorem~1.7.1]{BdlHV}).
Recall that $\pi$ is a unitary representation of $\G$ weakly contained in $\lambda$. 

Let $M={\pi(\sA)}^\prime$, and let $(M, H_\s, \mathsf{J}_\s, \mathsf{P}_\s)$ be its standard form in the sense of \cite{Haag75}.
By \cite[Theorem~3.2]{Haag75}, the action $\G\act M$ is implemented by a unitary representation $\sigma$ of $\G$ on $H_\s$. As usual, we endow $B(H_\s)$ with the $\G$-action defined by unitaries $\sigma_g$, $g\in \G$.

By the Amenability half, there exists a $\G$-invariant state on $M$. Since $M$ has separable predual, a standard argument yields a sequence $(\omega_n)$ of ultraweakly continuous states on $M$ such that $\|\omega_n\circ\sigma_g - \omega_n\|=\|g\omega_n - \omega_n\|\xrightarrow{n\to\infty} 0$ for every $g\in\G$.

By \cite[Lemma~2.10]{Haag75}, the map $\xi\mapsto \om_\xi|_M$ is a $\G$-equivariant homeomorphism from $\mathsf{P}_\s$ onto $M_*^+$, where $\om_\xi$ is the vector state defined by the vector $\xi$. Hence, there exists an almost $\sigma$-invariant sequence in $H_\s$, and property~(T) implies the existence of a $\sigma(\G)$-invariant unit vector $\xi_0$ in $H_\s$. The vector state $\om_{\xi_0}$ restricts to an ultraweakly continuous $\G$-invariant state on $M$. 
This completes the proof.
\end{proof}
%
%
We note that the proof of Theorem~\ref{thm:main} also yields the following:
\begin{thm}\label{thm:main-vN}
Every group $\G$ as in the statement of Theorem~\ref{thm:main} is $\lambda_\G$-just-infinite. 
\end{thm}

\begin{rem}
Margulis' NST and its generalizations such as Stuck--Zimmer's theorem \cite{StuckZim}, are considered as rigidity results for $\G$. However, there are just-infinite groups that should not really be considered as rigid objects in the same spirit as higher rank lattices. 

Examples~\ref{ex:Z} and~\ref{ex:LeB} show that the noncommutative just-infiniteness properties we consider in this section are indeed stronger properties than the usual group-theoretic notion, and therefore in this sense, Theorems~\ref{thm:main} and~\ref{thm:main-vN} entail sharper results compared to the NST for lattices $\G$ as in the statements.
\end{rem}
\begin{rem}
Another fact that follows from the arguments in the proof of Theorem~\ref{thm:main} is that, given a discrete property (T) group $\G$ and a unitary representations $\pi$ of $\G$, the weak $\pi$-just-infiniteness of $\G$ depends only on the weak equivalence class of $\pi$.
\end{rem}
%


\section{Proofs of Theorems \ref{thm:con-exp-rig-VN}, \ref{thm:con-exp-rig-C} and \ref{thm: LG}} \label{sec3}

\begin{proof}[Proof of Theorem~\ref{thm:con-exp-rig-VN}]
Let $\sN$ be a non-trivial $\G$-invariant von Neumann subalgebra of $\cL(\G)$. By Corollary~\ref{cor:no-amena} and Theorem~\ref{thm:main-vN}, $\sN$ is a factor.
As shown in \cite[Lemma~4.3.2 \& Remark~4.2.8]{Bru}, there is a non-trivial normal subgroup $\L\le\G$ such that the restriction of the $\G$-action on $\sN$ to $\L$ is inner. This gives a projective unitary representation $\sigma:\L\to \cU(\sN)$. Since $\sN$ is a factor, we also get a projective unitary representation $\sigma^\prime:\L\to \cU(\sN^\prime\cap\cL(\G))$ such that $\lambda_\G(g) = \sigma(g)\sigma^\prime(g)$ for all $g\in\L$. Furthermore, the latter decomposition is unique. In particular, both $\sigma(\L)^{\prime\prime}$ and $\sigma^\prime(\L)^{\prime\prime}$ are $\G$-invariant, hence factors by Corollary~\ref{cor:no-amena}.

Let $\tilde\sN$ be the von Neumann algebra generated by $\sigma(\L)\cup\sigma^\prime(\L)$. Then $\cL(\L)\subseteq\tilde\sN\subseteq\cL(\G)$. Since $\L$ has finite index in $\G$ (by Margulis' NST), it follows that $\cL(\L)$ has finite index in $\tilde\sN$.
Thus, as observed in the proof of \cite[Lemma~4.3.6]{Bru}, it cannot be the case that the trace vanishes on $\sigma(g)$ and $\sigma^\prime(g)$ for all non-neutral $g\in \L$.

The proof of \cite[Theorem C]{BoutHoud} can be modified to conclude that if $\pi$ is a projective unitary representation of $\L$ generating a finite factor $\sM$, then either $\sM$ is finite dimensional or else, $\tau(\pi(g)) = 0$ for all $g\in \L$, $g\neq e$, where $\tau$ is the trace of $\sM$.

Thus, either $\sigma(\L)^{\prime\prime}$ or $\sigma^\prime(\L)^{\prime\prime}$ is finite dimensional, hence trivial by Proposition~\ref{prop:no-amen-Vn} since they are $\G$-invariant.
We therefore conclude that either $\sN$ or its relative commutant $\sN^\prime\cap\cL(\G)$ contains $\cL(\L)$, hence has finite index in $\cL(\G)$. This implies that one of the these von Neumann algebras is finite dimensional \cite[Corollary~2.2.3]{Jon83}, hence trivial since they are $\G$-invariant.
Since we assumed $\sN$ is non-trivial, it follows that $\sN^\prime\cap\cL(\G)$ is trivial. 
We may now invoke \cite[Theorem 3.15]{CD} to conclude the result.
\end{proof}

\begin{proof}[Proof of Theorem~\ref{thm: LG}]
Let $\mathcal{E}$ be an ultraweakly continuous $\G$-equivariant conditional expectation on $\cL(\G)$. Then the image of $\cE$ is a $\G$-invariant von Neumann subalgebra of $\cL(\G)$, hence of the form $\cL(\L)$ for some normal subgroup $\L\le\G$, by Theorem~\ref{thm:con-exp-rig-VN}. 
Since $\G$ is icc, the canonical trace $\tau$ is the unique trace on $\cL(\G)$. Since the characteristic function $\one_\L$ extends to the unique trace preserving conditional expectation from $\cL(\G)$ onto $\cL(\L)$, it suffices to show that $\cE$ is trace preserving. For this, we observe that since $\cE$ is $\G$-equivariant, $\tau \mathcal{E}$ is $\G$-invariant and unltraweakly continuous, hence a trace on $\cL(\G)$. By the uniqueness of $\tau$, we conclude that $\tau \mathcal{E}=\tau$, thereby completing the proof.
\end{proof}


\begin{proof}[Proof of Theorem~\ref{thm:con-exp-rig-C}]
Let $\pi$ be a non-amenable unitary representation of $\G$. Then $\pi$ does not contain any finite-dimensional subrepresentation \cite[Theorem 1.3]{Bek90}. Hence, it follows from \cite[Corollary D]{BoutHoud} that $\pi$ weakly contains $\lambda_\G$, and $C^*_\pi(\G)$ has a unique trace $\tau$, which is the lift of the canonical trace on $\csr(\G)$. Let ${I}$ be the kernel of the quotient map $C^*_\pi(\G)\to \csr(\G)$. Since $\tau$ factors to a faithful trace on $\csr(\G)$, it follows ${I} = \{\mathfrak{a}\in C^*_\pi(\G) \colon \tau(\mathfrak{a}^{*} \mathfrak{a}) = 0\}$.

Assume $\cE$ is a $\G$-equivariant conditional expectation from $C_{\pi}^{*}(\G)$ onto a $C^*$-subalgebra $\sA\subseteq C_{\pi}^{*}(\G)$. Since $\cE$ is $\G$-equivariant, $\sA$ is $\G$-invariant.

Furthermore, the $\G$-equivariance of $\cE$ implies that $\tau\cE$ is $\G$-invariant, hence a trace on $C^*_\pi(\G)$. Since $\tau$ is the unique trace on $C^*_\pi(\G)$, we conclude $\tau\cE=\tau$.

Given $\mathfrak{a}\in {I}$, we get $\tau(\cE(\mathfrak{a})^*\cE(\mathfrak{a}))\le \tau(\cE(\mathfrak{a}^{*}\mathfrak{a})) = \tau(\mathfrak{a}^{*}\mathfrak{a}) = 0$, which implies $\cE({I})\subset {I}$. Thus, $\cE$ factors to a map on $\csr(\G)$, which is also a $\G$-equivariant conditional expectation, we still denote it by $\cE$. Since $\cE$ is trace-preserving, it is faithful on $\csr(\G)$, hence $\cE$ extends to an ultraweakly continuous trace-preserving conditional expectation $\cE:\cL(\G)\to \sA^{\prime\prime}$ (see \cite[Lemma~4.5]{Po}). By Theorem~\ref{thm: LG}, we have that $\cE|_{\lambda_\G(\G)}= \one_{\lambda_\G(\L)}$ for some normal subgroup $\Lambda\leq \G$. This implies $\cE|_{\pi(\G)}=\one_{\pi(\L)}$.
\end{proof}

We conclude this section with an example of an icc group $\G$ for which $\csr(\G)$ contains a $\G$-invariant unital $C^*$-subalgebra that is not the image of any $\G$-equivariant conditional expectation.

\begin{example}\label{ex:no-cond}
Let $\G$ be as in Example~\ref{ex:LeB}, and let $\sA$ be the $C^*$-subalgebra of $\csr(\G)$ constructed from a non-zero proper ideal $I$ as in the proof of Proposition~\ref{prop:counter}. Assume $\cE:\csr(\G)\to \sA$ is a $\G$-equivariant conditional expectation. Since $\csr(\G)$ has a unique trace, it follows, as seen in the proof of Theorem~\ref{thm:main} above, that $\cE$ is faithful and trace preserving, hence extends to the canonical trace preserving conditional expectation from $\cL(\G)$ onto $\sA^{\prime\prime}$. Since $\cL(\G)$ is a factor, and $I$ is non-zero, it follows $\sA^{\prime\prime}=\cL(\G)$. In particular, $\cE=\id$, and therefore $\sA=\csr(\G)$. This implies the quotient of $\csr(\G)$ by $I$ has dimension one, and so admits a trace, which contradicts the uniqueness of the canonical trace on $\csr(\G)$. Hence, $\sA$ cannot be the image of any $\G$-equivariant conditional expectation on $\csr(\G)$.  
\end{example}


\subsection*{Acknowledgements}
We thank R\'{e}mi Boutonnet and Eduardo Scarparo for their valuable comments on the earlier versions of this paper.

\end{document}